\documentclass[11pt,a4paper]{article}
\usepackage[T1]{fontenc}
\usepackage[utf8]{inputenc}
\usepackage{lmodern}
\usepackage{amsmath,amssymb,amsthm,mathtools}
\usepackage[margin=26mm,headheight=14pt]{geometry}
\usepackage{microtype}
\usepackage{xcolor}
\usepackage{fancyhdr}
\usepackage[colorlinks=true,linkcolor=blue!45!black,citecolor=blue!45!black,urlcolor=blue!45!black]{hyperref}
\hypersetup{pdftitle={A Gaussian Chain-Rule Proof of the Banach-Space Hanson-Wright Bound},pdfsubject={The matrix-family estimate and Conjecture 2 of Adamczak, Latala and Meller}}
\allowdisplaybreaks[1]
\numberwithin{equation}{section}
\newtheorem{theorem}{Theorem}[section]
\newtheorem{lemma}[theorem]{Lemma}
\newtheorem{proposition}[theorem]{Proposition}

\theoremstyle{remark}
\newtheorem{remark}[theorem]{Remark}
\newcommand{\E}{\mathbb E}
\newcommand{\PP}{\mathbb P}
\newcommand{\R}{\mathbb R}
\newcommand{\cA}{\mathcal A}
\newcommand{\cF}{\mathcal F}
\newcommand{\norm}[1]{\lVert #1\rVert}
\newcommand{\abs}[1]{\lvert #1\rvert}
\newcommand{\ip}[2]{\langle #1,#2\rangle}
\newcommand{\HS}{\mathrm{HS}}
\newcommand{\op}{\mathrm{op}}
\newcommand{\rad}{\operatorname{rad}}
\newcommand{\diam}{\operatorname{diam}}
\newcommand{\Lip}{\operatorname{Lip}}

\title{\vspace{-7mm}A Gaussian Chain-Rule Proof of the\\Banach-Space Hanson--Wright Bound}
\author{{Witold Bednorz, Rafal Martynek and Rafal Meller}
\footnote{{\bf Subject classification:} 60G15, 60G17}
\footnote{{\bf Keywords and phrases:} Canonical Processes,  Invariant Method}
\footnote{Research partially supported by  Grant UMO-2022/47/B/ST1/02114}
\footnote{Institute of Mathematics, University of Warsaw, Banacha 2, 02-097 Warszawa, Poland}}
\date{}

\begin{document}
\maketitle
\vspace{-5mm}
\begin{abstract}
For a finite family of real matrices, we bound the Gaussian width of the union of its image ellipsoids by its maximal Hilbert--Schmidt norm, its maximal fixed-input Gaussian width, and the square root of the product of its operator radius and decoupled Gaussian chaos supremum. The proof applies the linear case of Maurer's Gaussian chain rule to the conditional image set $\{\pm A^Tg:A\in\cA\}$, and then uses Gaussian symmetrization. This gives the estimate in Conjecture 24 of Adamczak, Lata{\l}a and Meller and proves their Conjecture 2. We include the passage to Banach-space-valued quadratic forms and a direct chaining proof of the linear comparison.
\end{abstract}

\section{The matrix-family estimate}

Let $\cA$ be a nonempty finite family of real $m\times d$ matrices. Gaussian vectors are standard; distinct Gaussian vectors appearing together are independent unless explicitly related by a rotation. Define
\begin{align}
 W&=\E_g\sup_{A\in\cA}\norm{A^Tg}_2,
 &S&=\E_{g,h}\sup_{A\in\cA}\abs{g^TAh},\label{eq:WS}\\
 B&=\sup_{\norm{x}_2\le1}\E_g\sup_{A\in\cA}\abs{g^TAx},
 &H&=\sup_{A\in\cA}\norm{A}_{\HS},
 \qquad D=\sup_{A\in\cA}\norm{A}_{\op}.
 \label{eq:BHD}
\end{align}
Here $g\in\R^m$ and $h,x\in\R^d$. The letter $C$ denotes a numerical constant whose value may change between occurrences. The notation $a\lesssim b$ means $a\le Cb$.

\begin{theorem}\label{thm:main}
For every such family,
\begin{equation}
 \boxed{\quad W\lesssim H+B+\sqrt{DS}.\quad}
 \label{eq:main}
\end{equation}
Consequently, for every $p\ge1$,
\begin{equation}
 W\lesssim \frac{S}{\sqrt p}+B+H+\sqrt p\,D.
 \label{eq:pbound}
\end{equation}
The constants are independent of the dimensions and the cardinality of $\cA$.
\end{theorem}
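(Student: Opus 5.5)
We follow the route announced in the abstract: condition on $g$ and apply the linear case of Maurer's Gaussian chain rule to the random set $T(g)=\{\pm A^Tg: A\in\cA\}\subset\R^d$. Since $\norm{A^Tg}_2=\sup_{\norm{x}_2\le1}\ip{A^Tg}{x}$, the plan is to compare $W$ with the Gaussian width of $T(g)$. For a fixed set $T$, the Sudakov--Fernique type lower bound $\E_h\sup_{t\in T}\ip{t}{h}\gtrsim \sup_{t\in T}\norm{t}_2-\diam$-type corrections is not enough on its own. Instead we use the standard fact that for a symmetric set $T$,
\[
\sup_{t\in T}\norm{t}_2\lesssim \E_h\sup_{t\in T}\ip{t}{h}+\rad(T),
\]
which is trivial: take $t_0$ maximizing the norm, so that $\E\sup\ge \E\abs{\ip{t_0}{h}}\ge c\norm{t_0}_2$. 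This gives $W\lesssim \E_{g,h}\sup_A\abs{g^TAh}=S$. However, $S$ alone is too large, so the square root $\sqrt{DS}$ has to come from a more refined interpolation.

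The refinement is as follows. Write $\norm{A^Tg}_2^2=\ip{A^Tg}{A^Tg}$ and use the rotation trick: for $\theta\in[0,\pi/2]$, set $g_\theta=g\cos\theta+g'\sin\theta$. We then want to estimate
\[
\E\sup_A\bigl(\norm{A^Tg}_2-\E\norm{A^Tg}_2\bigr)
\]
through the chain rule. Maurer's linear chain rule bounds the Gaussian width of $\bigcup_{y}f(y,\cdot)$, here the map $g\mapsto A^Tg$ indexed by $A$, by
\[
L(\cA)\,\E_h\sup_A\abs{g^TAh}+\E_g\sup_A\abs{\ip{A^Tg}{x}}\text{-type terms}+\diam\cdot R,
\]
where the Lipschitz constant of $g\mapsto A^Tg$ is $D$. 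Concretely, I expect the chain rule to give an inequality of the form
\[
\E\sup_{A,\;\norm{x}\le1}\ip{A^Tg}{x}\lesssim B+H+\E\sup_A\ip{A^T g}{h}\cdot\frac{D}{\lambda}+\lambda
\]
after rescaling $x$ to the ball of radius $\lambda$ and using homogeneity. Here $H$ controls $\E\norm{A^Tg}_2\le\norm{A}_{\HS}$, and $B$ controls the fixed-input widths $\E\sup_A\abs{g^TAx}$. Optimizing over $\lambda$, i.e.\ taking $\lambda=\sqrt{DS}$, yields the product term $\sqrt{DS}$. Gaussian symmetrization then removes the centering: we replace $\sup_A\norm{A^Tg}$ by a decoupled supremum at a cost of $H$, using $\E\norm{A^Tg}\le H$ together with the concentration $\norm{A^Tg}-\E\norm{A^Tg}$ at Lipschitz scale $D$.

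The main obstacle is the precise application of the chain rule in the second step. The chain rule has to be set up with the right index set, namely pairs $(A,x)$ with $x$ ranging over a scaled ball, and the resulting terms must be identified with $B$, $S$, and $D$ without losing a $\log\abs{\cA}$ factor. I would first try to apply it to $T(g)$ intersected with a ball of radius $r$, splitting each $A^Tg$ into the part where $\norm{A^Tg}\le r$ and the excess, and then tune $r$.

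Finally, \eqref{eq:pbound} follows from \eqref{eq:main} by AM--GM:
\[
\sqrt{DS}\le \frac{S}{2\sqrt p}+\frac{\sqrt p\,D}{2}.
\]
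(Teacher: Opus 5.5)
Your deduction of \eqref{eq:pbound} from \eqref{eq:main} by AM--GM is correct, and so is the side remark $W\le\sqrt{\pi/2}\,S$. The central step, however, is not proved.

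\textbf{The gap.} The inequality you expect the chain rule to give, $W\lesssim B+H+SD/\lambda+\lambda$ for all $\lambda>0$, is equivalent to \eqref{eq:main}: AM--GM gives one direction and $\lambda=\sqrt{DS}$ the other. Postulating it therefore postulates the theorem. Nothing in your sketch produces a term $SD/\lambda$. Lemma~\ref{lem:chain} applied to a set $T$ returns $D\,w(T)+B\,\rad(T)$. If $T$ is a scaled ball $\lambda B_2^d$, as in your index set of pairs $(A,x)$, this yields only $W\lesssim B+D\sqrt d$, which is dimension dependent. Your symmetrization step has the same problem. Centering each $\norm{A^Tg}_2$ and using concentration at scale $D$ does not control $\E\sup_A(\norm{A^Tg}_2-\E\norm{A^Tg}_2)$, which is exactly the difficulty. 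Moreover, a difference $\norm{A^Tg}_2-\norm{A^T\widetilde g}_2$ of unsquared norms has no decoupled bilinear representation to which a linear chain rule could be applied.

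\textbf{The missing idea: square first.}
\begin{itemize}
\item Jensen with $\E\norm{A^Tg}_2^2=\norm{A}_{\HS}^2$, then convexity of the supremum in an independent copy $\widetilde g$, gives $W^2\le H^2+\E\sup_A(\norm{A^Tg}_2^2-\norm{A^T\widetilde g}_2^2)$.
\item The rotation $u=(g+\widetilde g)/\sqrt2$, $v=(g-\widetilde g)/\sqrt2$ turns the difference into the decoupled form $2u^TAA^Tv$.
\item Condition on $v$ and dominate $\sup_A\abs{u^TAA^Tv}$ by $\sup_{A,C}\abs{u^TAC^Tv}$.
\item Apply Lemma~\ref{lem:chain} in $u$ to the conditional image set $T_v=\{0\}\cup\{\pm C^Tv:C\in\cA\}$, not to a ball. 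Here $w(T_v)=\E_h\sup_C\abs{h^TC^Tv}$ and $\rad(T_v)=\sup_C\norm{C^Tv}_2$, whose averages over $v$ are exactly $S$ and $W$.
\item This gives the self-bounding quadratic inequality $W^2\le H^2+c\,BW+c\,DS$.
\item Absorbing $c\,BW\le\frac12W^2+\frac12c^2B^2$ yields \eqref{eq:main}.
\end{itemize}
So $\sqrt{DS}$ comes from solving this quadratic, not from optimizing a free scale $\lambda$. The term $BW$ that gets absorbed arises because the radius of the conditional set is $\sup_C\norm{C^Tv}_2$. Your sketch has no mechanism that produces it.
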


The quantity $W$ is the Gaussian width of $\bigcup_{A\in\cA}AB_2^d$. The parameters in \eqref{eq:pbound} agree with those in equations (27)--(28) of \cite{ALM}. No symmetry or diagonal restriction on the matrices is used in Theorem~\ref{thm:main}.

\section{A linear consequence of the Gaussian chain rule}

For a bounded set $T\subset\R^d$, write
\[
 w(T)=\E_h\sup_{x\in T}\ip{h}{x},
 \qquad \rad(T)=\sup_{x\in T}\norm{x}_2.
\]
For symmetric $T$, the supremum defining $w(T)$ can equivalently be taken with an absolute value.

\begin{lemma}[Linear Gaussian chain rule]\label{lem:chain}
Let $T\subset\R^d$ be finite, symmetric, and contain $0$. With $B,D$ defined by \eqref{eq:BHD},
\begin{equation}
 \E_z\sup_{\substack{A\in\cA\\x\in T}}\abs{\ip{z}{Ax}}
 \lesssim D\,w(T)+B\,\rad(T).
 \label{eq:linear-chain}
\end{equation}
\end{lemma}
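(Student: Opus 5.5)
This is the linear case of Maurer's Gaussian chain rule, applied to the family of maps $x\mapsto Ax$, $A\in\cA$, acting on $T$. The process in question is $X_{A,x}=\ip{z}{Ax}$, indexed by $\cA\times T$. Since $T$ is symmetric, the absolute value can be dropped at the cost of a constant. Because $0\in T$, we have $\sup\ge 0$, and $\E\sup\abs{X}\le 2\E\sup X$ for symmetric index sets.

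Maurer's chain rule states
\[
\E\sup_{A,x}\ip{z}{Ax}\le C\Big(L(\cA)\,w(T)+R(\cA,T)\Big)+\E\sup_{A}\ip{z}{Ax_0}.
\]
Here $L(\cA)=\sup_A\Lip(A)=D$, and
\[
R(\cA,T)=\sup_{x\ne y\in T}\E\sup_A\frac{\ip{z}{A(x-y)}}{\norm{x-y}_2}.
\]
By homogeneity, $R(\cA,T)\le B$. Choosing $x_0=0\in T$ kills the last term. Since $R$ is multiplied by $\diam(T)\le 2\rad(T)$ in Maurer's formulation, this gives \eqref{eq:linear-chain}.

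The abstract promises a direct chaining proof, so I would also sketch a self-contained argument. The approach is to compare the process $\ip{z}{Ax}$ with a sum of two processes. Its increments satisfy
\[
\ip{z}{Ax}-\ip{z}{A'x'}=\ip{z}{A(x-x')}+\ip{z}{(A-A')x'}.
\]
The first term has $L_2$ norm at most $D\norm{x-x'}_2$.

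I would run generic chaining on $T$, using an admissible sequence $(T_n)$ with projections $\pi_n$ that is nearly optimal for $\gamma_2(T)\simeq w(T)$ (Talagrand's majorizing measure theorem), and starting at $\pi_0=0$. I would then decompose
\[
\sup_{A,x}\ip{z}{Ax}\le \sum_{n\ge1}\sup_{A,x}\ip{z}{A(\pi_nx-\pi_{n-1}x)}.
\]
For fixed $n$, the vector $u=\pi_nx-\pi_{n-1}x$ ranges over at most $2^{2^{n+1}}$ values, with $\norm{u}\le \Delta_n(x)$.

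For each fixed $u$, the function $z\mapsto \sup_A\ip{z}{Au}$ is $D\norm{u}$-Lipschitz and has mean at most $B\norm{u}$. Gaussian concentration then gives
\[
\PP\Big(\sup_A\ip{z}{Au}> B\norm{u}+tD\norm{u}\Big)\le e^{-t^2/2}.
\]
I would take $t=C2^{n/2}+s$ and apply a union bound over the $2^{2^{n+1}}$ values of $u$. On the resulting event, level $n$ contributes at most $(B+D(C2^{n/2}+s))\norm{u}$.

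Summing over $n$ produces a $D\sum_n 2^{n/2}\Delta_n(x)\lesssim D\gamma_2(T)$ part, which is the $D\,w(T)$ term. It also produces $B\sum_n\norm{\pi_nx-\pi_{n-1}x}$. This last sum is the main obstacle: $\sum_n\Delta_n$ is not bounded by $\rad(T)$ in general, so $B$ must not be paid at every level.

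The fix is to treat the first level separately. One writes $x=\pi_0x+(x-\pi_0x)$ with $\pi_0x$ handled by $B\,\rad(T)$ (more precisely, a fixed direction paid once with mean $B\rad(T)$). The tail chain is then estimated by the centered quantity $\sup_A\ip{z}{Au}-\E\sup_A\ip{z}{Au}$, which concentrates with scale $D\norm{u}$.

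The catch is that the means $\E\sup_A\ip{z}{Au}$ still appear at each level. To avoid this, I would run the chain not over $T$ alone but over $\cA\times T$ with the canonical distance. That distance is dominated by $D\norm{x-x'}+\norm{(A-A')x'}$, and the second part is controlled by a Sudakov/$B$-type estimate used only once at radius $\rad(T)$. Concretely, I would bound the $A$-supremum at a frozen point $x$ by $B\rad(T)$ using the contraction/decoupling $\E\sup_A\ip{z}{Ax}\le B\norm{x}$. This is the step I expect to require the most care, and if it resists I would fall back on citing Maurer's theorem as in the first paragraph.
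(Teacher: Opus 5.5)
Your first paragraph is the paper's proof: apply Maurer's Theorem~2 with base point $0$, $\Lip\le D$, $\mathcal R\le B$ by homogeneity, and $\diam(T)\le 2\rad(T)$. Dropping the signs $\sigma$ is harmless, since for symmetric $T$ one has $\sup_{A,x}\abs{\ip{z}{Ax}}=\sup_{A,x}\ip{z}{Ax}$ exactly. Your chaining sketch is optional and, as you suspected, unfinished. The paper's appendix avoids paying the mean at every level without chaining over $\cA\times T$: it keeps only the levels where $\norm{t-\pi_k(t)}_2$ halves, so the increment lengths sum to at most $3\rad(T)$ while the $2^{k/2}$-weighted sums stay $\lesssim w(T)$.
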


\begin{proof}
We apply Theorem 2 of Maurer~\cite{Maurer} to
\[
 \cF=\{x\mapsto \sigma Ax:A\in\cA,\ \sigma\in\{-1,1\}\}.
\]
That theorem states, with universal constants, that
\begin{equation}
 w(\cF(T))\lesssim
 \Lip(\cF)w(T)+\diam(T)\mathcal R(\cF,T)+w(\cF(0)),
 \label{eq:maurer}
\end{equation}
where
\[
 \mathcal R(\cF,T)=
 \sup_{\substack{x,y\in T\\x\ne y}}
 \frac{\E_z\sup_{f\in\cF}\ip{z}{f(x)-f(y)}}{\norm{x-y}_2}.
\]
The supremum over an empty collection of pairs is taken to be zero. In our linear setting,
\[
 \Lip(\cF)\le D,
 \qquad
 \mathcal R(\cF,T)
 =\sup_{x\ne y}\frac{\E_z\sup_A\abs{z^TA(x-y)}}{\norm{x-y}_2}
 \le B.
\]
Moreover, $\diam(T)\le2\rad(T)$ and $\cF(0)=\{0\}$. Substitution into \eqref{eq:maurer} proves the lemma. A direct proof of the linear case is given in Appendix~\ref{app:chain}.
\end{proof}

\section{The correlated quadratic comparison}

Define
\begin{equation}
 Q=\E_{g,z}\sup_{A\in\cA}\abs{z^TAA^Tg},
 \qquad g,z\in\R^m.
 \label{eq:Q}
\end{equation}

\begin{proposition}\label{prop:Q}
For every finite family $\cA$,
\begin{equation}
 \boxed{\quad Q\lesssim DS+BW.\quad}
 \label{eq:Qbound}
\end{equation}
\end{proposition}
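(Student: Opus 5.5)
We condition on $g$ and apply Lemma~\ref{lem:chain} with the conditional image set suggested in the abstract. Fix $g\in\R^m$ and set
\[
 T_g=\{\pm A^Tg:\ A\in\cA\}\cup\{0\}\subset\R^d ,
\]
which is finite, symmetric and contains $0$. For every $A\in\cA$ we have $\abs{z^TAA^Tg}=\abs{\ip{z}{Ax}}$ with $x=A^Tg\in T_g$. Hence
\[
 \E_z\sup_{A\in\cA}\abs{z^TAA^Tg}\le \E_z\sup_{A'\in\cA,\ x\in T_g}\abs{\ip{z}{A'x}}
 \lesssim D\,w(T_g)+B\,\rad(T_g),
\]
where we decoupled the matrix acting on $x$ from the one producing $x$. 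This only enlarges the supremum, so it costs nothing. We then take expectation in $g$.

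The radius term is immediate: $\E_g\rad(T_g)=\E_g\sup_A\norm{A^Tg}_2=W$, which gives the term $BW$.

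For the width term, introduce an independent Gaussian $h\in\R^d$. By symmetry of $T_g$,
\[
 \E_g w(T_g)=\E_{g}\E_h\sup_{A}\abs{\ip{h}{A^Tg}}=\E_{g,h}\sup_A\abs{g^TAh}=S.
\]
This gives the term $DS$, and so $Q\lesssim DS+BW$.

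The main point to check is that Lemma~\ref{lem:chain} is applied with the constants $B$ and $D$ of the whole family, independently of $g$. This holds because these constants involve only $\cA$, while $T_g$ enters only through $w(T_g)$ and $\rad(T_g)$. So the conditional application is legitimate, and Fubini handles the expectation since all sets are finite. The "Gaussian symmetrization" mentioned in the abstract is presumably used later to pass from $Q$ to $W^2$, for example by comparing $W^2$ with $\E\sup_A\norm{A^Tg}_2^2$ and rotating $(g,z)$. That step is not needed for this proposition.
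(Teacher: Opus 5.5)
Your proof is correct and follows the paper's argument step for step. You condition on $g$, replace $AA^Tg$ by $AC^Tg$ so that the supremum runs over $A\in\cA$ and $x\in T_g=\{0\}\cup\{\pm A^Tg:A\in\cA\}$, apply Lemma~\ref{lem:chain} with the family constants $B,D$, and average using $\E_g\rad(T_g)=W$ and $\E_g w(T_g)=S$.
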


\begin{proof}
Fix $g$ and form the actual conditional image set
\[
 T_g=\{0\}\cup\{\pm A^Tg:A\in\cA\}\subset\R^d.
\]
Set
\[
 r_g=\sup_A\norm{A^Tg}_2,
 \qquad s_g=\E_h\sup_A\abs{h^TA^Tg}.
\]
Then
\begin{equation}
 \rad(T_g)=r_g,\qquad w(T_g)=s_g,
 \qquad \E_g r_g=W,\qquad \E_g s_g=S.
 \label{eq:conditional-parameters}
\end{equation}
The vector $z$ is independent of $g$, so Lemma~\ref{lem:chain} applies with $T_g$ held fixed. Enlarging the diagonal choice of two matrices gives
\begin{align*}
 \E_z\sup_A\abs{z^TAA^Tg}
 &\le \E_z\sup_{A,C\in\cA}\abs{z^TAC^Tg}\\
 &=\E_z\sup_{\substack{A\in\cA\\x\in T_g}}\abs{z^TAx}
 \lesssim D s_g+B r_g.
\end{align*}
Averaging over $g$ and using \eqref{eq:conditional-parameters} proves \eqref{eq:Qbound}. All sets are finite, so measurability causes no difficulty.
\end{proof}

\begin{proof}[Proof of Theorem~\ref{thm:main}]
Let $\widetilde g$ be an independent copy of $g$. Since
$\E\norm{A^Tg}_2^2=\norm{A}_{\HS}^2$, Jensen's inequality gives
\begin{align}
 W^2
 &\le \E_g\sup_A\norm{A^Tg}_2^2\notag\\
 &\le H^2+\E_g\sup_A\bigl(\norm{A^Tg}_2^2-\norm{A}_{\HS}^2\bigr)\notag\\
 &\le H^2+\E_{g,\widetilde g}\sup_A
 \bigl(\norm{A^Tg}_2^2-\norm{A^T\widetilde g}_2^2\bigr).
 \label{eq:symmetrization}
\end{align}
For the last step, the supremum is convex in the finite vector of its arguments, and expectation over $\widetilde g$ is moved outside it.

The rotation
\[
 u=\frac{g+\widetilde g}{\sqrt2},\qquad
 v=\frac{g-\widetilde g}{\sqrt2}
\]
produces independent standard Gaussian vectors. For every $A$,
\[
 \norm{A^Tg}_2^2-\norm{A^T\widetilde g}_2^2
 =2\ip{A^Tu}{A^Tv}=2u^TAA^Tv.
\]
Thus \eqref{eq:symmetrization} and Proposition~\ref{prop:Q} imply
\begin{equation}
 W^2\le H^2+2Q\le H^2+C BW+C DS.
 \label{eq:quadratic-closure}
\end{equation}
Absorb $CBW$ using
$CBW\le \tfrac12W^2+\tfrac12C^2B^2$.
This proves \eqref{eq:main}. Finally,
\[
 \sqrt{DS}\le\frac12\left(\frac{S}{\sqrt p}+\sqrt p\,D\right)
\]
gives \eqref{eq:pbound} for every $p\ge1$.
\end{proof}

\begin{remark}[Why the correlations are retained]
The application of the chain rule is conditional on the original vector $g$. The same Gaussian vector $z$ acts on every point $AC^Tg$; no independent multiplier is assigned to a group of matrices. Operator distinctions that give the same point of $T_g$ disappear automatically. The conditional image contributes $Ds_g$, and the Gaussian increment complexity of the linear maps contributes $Br_g$. The latter becomes $BW$ and is absorbed only after symmetrization.
\end{remark}

\begin{remark}[Compact families]
Theorem~\ref{thm:main} extends to compact matrix families by taking increasing finite subsets of a countable dense subset. The suprema converge pointwise; monotone convergence applies after taking the symmetric families and adjoining zero. In finite dimensions every bounded family can first be replaced by its compact closure. The same constants apply.
\end{remark}

\section{Banach-space-valued quadratic forms}

Let $(F,\norm{\cdot}_F)$ be a real normed space and let $a_{ij}=a_{ji}\in F$, $1\le i,j\le n$. Only their finite-dimensional span matters. Define
\begin{align*}
 X(g)&=\sum_{i,j=1}^n a_{ij}(g_i g_j-\delta_{ij}),
 &M&=\E\norm{X(g)}_F,\\
 H_F&=\sup_{\sum_{i,j}x_{ij}^2\le1}
       \left\|\sum_{i,j}a_{ij}x_{ij}\right\|_F,
 &D_F&=\sup_{\norm{x}_2,\norm{y}_2\le1}
       \left\|\sum_{i,j}a_{ij}x_i y_j\right\|_F,\\
 B_0&=\sup_{\norm{x}_2\le1}\E_g
       \left\|\sum_{i\ne j}a_{ij}g_i x_j\right\|_F.
\end{align*}

\begin{theorem}[Conjecture 2 of \cite{ALM}]\label{thm:banach}
For every $p\ge1$,
\begin{equation}
 \boxed{\quad
 \bigl(\E\norm{X(g)}_F^p\bigr)^{1/p}
 \lesssim M+\sqrt p\,B_0+\sqrt p\,H_F+pD_F.
 \quad}
 \label{eq:banach}
\end{equation}
\end{theorem}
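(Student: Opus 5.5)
We follow the reduction in \cite{ALM}: there Conjecture 2 is shown to follow from the matrix-family estimate of their Conjecture 24, which is exactly \eqref{eq:pbound}. We therefore reproduce that reduction and plug in Theorem~\ref{thm:main}. By duality, pick a finite (or compact) norming set of functionals $\varphi$ in the unit ball of $F^*$ for the finite-dimensional span of the $a_{ij}$. To each $\varphi$ associate the real symmetric matrix $A_\varphi=(\varphi(a_{ij}))_{i,j}$. Then $\norm{X(g)}_F=\sup_\varphi\abs{g^TA_\varphi g-\operatorname{tr}A_\varphi}$, and the Banach parameters become the family parameters: $H_F=\sup_\varphi\norm{A_\varphi}_{\HS}$ and $D_F=\sup_\varphi\norm{A_\varphi}_{\op}$. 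The quantity $B_0$ is the analogue of $B$ for the off-diagonal parts $A_\varphi^{\mathrm{off}}$. The compact-family remark lets us pass from a finite norming set to the full dual ball.

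\emph{Step 1: moments of a Gaussian chaos sup.} First decouple, and treat the diagonal and off-diagonal parts separately. The diagonal part $\sum_i a_{ii}(g_i^2-1)$ is a sum of independent centred variables with sub-exponential tails. By standard Banach-space moment estimates (Talagrand/Latała type), its $p$-th moment is bounded by its mean plus $\sqrt p$ times a weak variance, which is $\le H_F$, plus $p$ times $\sup\norm{a_{ii}}\le D_F$.

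For the off-diagonal part we use the decoupled chaos $\sup_\varphi g^TA_\varphi^{\mathrm{off}}\widetilde g$. Condition on $\widetilde g$ and apply Gaussian concentration in $g$; the Lipschitz constant is $\sup_\varphi\norm{A_\varphi\widetilde g}_2$. Then concentrate the conditional expectation in $\widetilde g$. This yields
\[
\bigl(\E\norm{X}^p\bigr)^{1/p}\lesssim M+\sqrt p\,\E\sup_\varphi\norm{A_\varphi^{\mathrm{off}}\widetilde g}_2+\sqrt p\,B_0+p\,D_F,
\]
where the term $\sqrt p B_0$ comes from the Lipschitz constant in $\widetilde g$ of $\E_g\sup_\varphi\abs{g^TA_\varphi\widetilde g}$. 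This is the known bound from \cite{ALM}, with the extra $W$-term $\sqrt p\,W$.

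\emph{Step 2: control $W$.} Apply \eqref{eq:pbound} to the family $\{A_\varphi^{\mathrm{off}}\}$ (symmetric, so transposes are irrelevant). This gives $\sqrt p\,W\lesssim S+\sqrt p B_0+\sqrt p H_F+pD_F$. The factor $\sqrt p$ exactly cancels the $1/\sqrt p$ on $S$, and removing the diagonal changes the HS and operator norms by at most a constant. It remains to show $S\lesssim M+H_F+D_F$. Decoupling gives $S\lesssim\E\sup_\varphi\abs{g^TA_\varphi^{\mathrm{off}}g}$. The diagonal has been removed, so this is controlled by $M$ plus the diagonal contribution $\E\norm{\sum_i a_{ii}(g_i^2-1)}$. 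We must bound this carefully, by showing it is itself $\lesssim M$ via a conditional Jensen argument: average over random sign flips $g_i\mapsto\varepsilon_ig_i$, which kill off-diagonal terms in expectation.

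\emph{Main obstacle.} The delicate step is Step 1 with correct constants. In particular, the Lipschitz constant in $\widetilde g$ must produce $B_0$, with the supremum over $x$ outside the expectation, rather than a larger weak parameter. If direct concentration fails there, we would instead invoke the moment bound for decoupled Gaussian chaoses in Banach spaces from \cite{ALM}, which contains exactly the terms listed plus $\sqrt p\,\E\sup\norm{A^T g}_2$, and then only Step 2 is needed.
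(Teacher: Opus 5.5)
Your proof is correct. Its skeleton matches the paper's: a moment bound containing a $\sqrt p\,W$-type term, then the matrix-family estimate applied to $A_\varphi=(\varphi(a_{ij}))$, then $S\lesssim M$. The implementation differs.

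\textbf{The paper's route.} It never splits off the diagonal. Lemma~\ref{lem:moment} gets $M+\sqrt p\,W_F+pD_F$ directly from Gaussian isoperimetry on the undecoupled chaos, using the expansion of $X(z+tu)$ on a set of measure at least $1/2$. Theorem~\ref{thm:main} is then applied to the full family. The diagonal costs only $B_F\le B_0+H_F$, via the one-line bound $\E\norm{\sum_i a_{ii}g_ix_i}_F\le H_F$. Finally $S_F\le M$ comes from the rotation identity, and $\sqrt{pD_FM}\le(M+pD_F)/2$ closes the argument.

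\textbf{Your route.} You isolate the tetrahedral part, so the $B$-parameter of $\{A_\varphi^{\mathrm{off}}\}$ is exactly $B_0$. The price is heavier machinery:
a $p$-th moment decoupling inequality (de la Pe\~na--Montgomery-Smith);
an external Lata{\l}a/Talagrand-type moment inequality for the independent sum $\sum_i a_{ii}(g_i^2-1)$;
and the sign-flip bound $\E\norm{\sum_i a_{ii}(g_i^2-1)}_F\le M$, which is valid.

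\textbf{Two simplifications.} The hedge in your last paragraph is unnecessary. The map $\widetilde g\mapsto\E_g\sup_\varphi\abs{g^TA^{\mathrm{off}}_\varphi\widetilde g}$ is a seminorm with Lipschitz constant exactly $B_0$, so direct conditional concentration gives what you claim. The separate $\sqrt p\,B_0$ term there is even redundant, since $B_0\le \E\sup_\varphi\norm{A^{\mathrm{off}}_\varphi\widetilde g}_2$. Likewise, reverse decoupling for $S$ can be replaced by the exact identity $X^{\mathrm{off}}(u)-X^{\mathrm{off}}(v)=2\sum_{i\ne j}a_{ij}g_ih_j$. With your sign-flip bound this gives $S\le\E\norm{X^{\mathrm{off}}}_F\le2M$ directly.

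In short, the paper's argument is self-contained, with Gaussian isoperimetry as its only probabilistic input. Yours reproduces the Adamczak--Lata{\l}a--Meller reduction through the off-diagonal chaos at the cost of these standard but heavier tools.
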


We give the reduction explicitly. Introduce the full linear quantities
\begin{align*}
 W_F&=\E_g\sup_{\norm{x}_2\le1}
       \left\|\sum_{i,j}a_{ij}g_i x_j\right\|_F,\\
 B_F&=\sup_{\norm{x}_2\le1}\E_g
       \left\|\sum_{i,j}a_{ij}g_i x_j\right\|_F,
 \qquad
 S_F=\E_{g,h}\left\|\sum_{i,j}a_{ij}g_i h_j\right\|_F.
\end{align*}

\begin{lemma}\label{lem:moment}
For every $p\ge1$,
\begin{equation}
 \bigl(\E\norm{X(g)}_F^p\bigr)^{1/p}
 \lesssim M+\sqrt p\,W_F+pD_F.
 \label{eq:standard-moment}
\end{equation}
\end{lemma}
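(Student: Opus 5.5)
We derive \eqref{eq:standard-moment} from Gaussian concentration and decoupling, writing the norm as a supremum of linear functionals. Let $U$ be the unit ball of the dual space $F^*$, restricted to the finite-dimensional span of the $a_{ij}$. Then $\norm{X(g)}_F=\sup_{\varphi\in U}\bigl(g^TA_\varphi g-\operatorname{tr}A_\varphi\bigr)$ with $A_\varphi=(\varphi(a_{ij}))_{i,j}$ symmetric. In this notation
\[
W_F=\E_g\sup_{\varphi}\norm{A_\varphi g}_2,\qquad D_F=\sup_\varphi\norm{A_\varphi}_{\op},
\]
and $H_F=\sup_\varphi\norm{A_\varphi}_{\HS}$.

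\textbf{Step 1: Lipschitz control of $\norm{X(g)}_F$.} On the event $G=\{g:\sup_\varphi\norm{A_\varphi g}_2\le W_F+C\sqrt p\,D_F\}$, the function $f(g)=\norm{X(g)}_F$ has local Lipschitz constant controlled by $\sup_\varphi 2\norm{A_\varphi g}_2$. Indeed, the gradient of $g\mapsto g^TA_\varphi g$ is $2A_\varphi g$. The map $g\mapsto\sup_\varphi\norm{A_\varphi g}_2$ is $D_F$-Lipschitz, so by Gaussian concentration the complement of $G$ has probability at most $e^{-p}$.

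Equivalently, we can work with moments directly via the Gaussian Poincaré/Beckner-type moment inequality
\[
\norm{f-\E f}_p\lesssim\sqrt p\,\bigl\|\,\abs{\nabla f}\,\bigr\|_p .
\]
This inequality holds for Gaussian measure: Pisier's inequality or Aida--Stroock gives $\norm{f-\E f}_p\lesssim\sqrt p\norm{\abs{\nabla f}}_p$. Here $\abs{\nabla f}\le 2\sup_\varphi\norm{A_\varphi g}_2$ almost everywhere. Applying the same inequality to the $D_F$-Lipschitz function $\sup_\varphi\norm{A_\varphi g}_2$ gives
\[
\bigl\|\sup_\varphi\norm{A_\varphi g}_2\bigr\|_p\le W_F+C\sqrt p\,D_F .
\]

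\textbf{Step 2: combine.} From Step 1,
\[
\norm{f}_p\le M+C\sqrt p\,(W_F+\sqrt p\,D_F)=M+C\sqrt p\,W_F+C p\,D_F,
\]
which is \eqref{eq:standard-moment}. Since $f$ is only locally Lipschitz, the use of $\nabla f$ should be justified. The function $f$ is a supremum of quadratics over a compact set, hence locally Lipschitz and differentiable almost everywhere. Its gradient at a point of differentiability is bounded by $\sup_\varphi 2\norm{A_\varphi g}_2$, because directional derivatives of a supremum are dominated by the supremum of directional derivatives.

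\textbf{Main obstacle.} The delicate point is the moment inequality $\norm{f-\E f}_p\lesssim\sqrt p\norm{\abs{\nabla f}}_p$ for non-globally-Lipschitz $f$ with the sharp $\sqrt p$ dependence. I would first try Pisier's interpolation argument: with $\widetilde g$ independent and $g_\theta=g\sin\theta+\widetilde g\cos\theta$,
\[
f(g)-f(\widetilde g)=\int_0^{\pi/2}\ip{\nabla f(g_\theta)}{g_\theta'}\,d\theta .
\]
Here $g_\theta'$ is a standard Gaussian vector independent of $g_\theta$. Conditioning on $g_\theta$, the $L_p$ norm of $\ip{\nabla f(g_\theta)}{g_\theta'}$ is at most $C\sqrt p\,\abs{\nabla f(g_\theta)}$. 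Jensen's inequality then yields the claim, using that $g_\theta$ is itself standard Gaussian.

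As an alternative, one could decouple first using the decoupling inequality for Banach-valued chaos (de la Peña--Montgomery-Smith). For decoupled chaoses the bound follows by conditioning and applying linear Gaussian concentration twice. The diagonal part would be handled separately, but that route is messier, so the Pisier argument is the primary plan.
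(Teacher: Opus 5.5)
Your proof is correct, but it takes a different route from the paper. The paper never differentiates $\norm{X(g)}_F$. Instead, it takes the set $K$ of points $z$ with $\norm{X(z)}_F\le 4M$ and $\sup_{\norm{x}_2\le1}\norm{\sum_{i,j}a_{ij}z_ix_j}_F\le 4W_F$, which has measure at least $1/2$ by Markov. It then uses the exact degree-two expansion of $X(z+tu)$ to get $\norm{X}_F\le 4M+8tW_F+t^2D_F$ on $K+tB_2^n$, and finishes with Gaussian isoperimetry and integration of the tail. There the $pD_F$ term comes from the $t^2$ term of the expansion, and the linear quantity is needed only on a set of measure $1/2$.

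In your argument, Pisier's inequality $\norm{f-\E f}_p\lesssim\sqrt p\,\norm{\abs{\nabla f}}_p$, together with $\abs{\nabla f}\le 2\sup_\varphi\norm{A_\varphi g}_2$, reduces everything to the $p$-th moment of the $D_F$-Lipschitz function $\sup_\varphi\norm{A_\varphi g}_2$. So your $pD_F$ term comes from a second, linear concentration step. Your route gives $M$ with constant $1$ and makes explicit that the gradient is the linear term. The paper's route avoids all regularity questions and gets a tail bound directly from isoperimetry and Markov.

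Two small points. The event $G$ in Step 1 is never used and can be dropped. And since your $f$ is only locally Lipschitz, you should say why the interpolation identity holds. For each $\theta$ the vector $g_\theta$ is standard Gaussian, hence almost surely a point of differentiability by Rademacher's theorem. By Fubini, for almost every $(g,\widetilde g)$ the Lipschitz map $\theta\mapsto f(g_\theta)$ then has derivative $\ip{\nabla f(g_\theta)}{g_\theta'}$ for almost every $\theta$; alternatively, mollify $f$. The final averaging over $\theta$ is Minkowski's integral inequality and costs a factor $\pi/2$.
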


\begin{proof}
This is the standard Gaussian concentration estimate underlying equation (3) of \cite{ALM}. For completeness, let
\[
 K=\left\{z:\norm{X(z)}_F\le4M,\quad
 \sup_{\norm{x}_2\le1}\left\|\sum_{i,j}a_{ij}z_i x_j\right\|_F\le4W_F\right\}.
\]
Markov's inequality gives $\gamma_n(K)\ge1/2$. If $z\in K$ and $\norm{u}_2\le1$, symmetry of the coefficients gives
\[
 X(z+tu)=X(z)+2t\sum_{i,j}a_{ij}z_i u_j
                    +t^2\sum_{i,j}a_{ij}u_i u_j.
\]
Consequently,
\[
 \norm{X(z+tu)}_F\le4M+8tW_F+t^2D_F.
\]
Gaussian isoperimetry therefore yields, for $t\ge0$,
\[
 \PP\{\norm{X(g)}_F>4M+8tW_F+t^2D_F\}\le e^{-t^2/2}.
\]
Integration of this tail bound proves \eqref{eq:standard-moment}. If $M=0$, the quadratic polynomial vanishes almost surely and the assertion is immediate.
\end{proof}

\begin{proof}[Proof of Theorem~\ref{thm:banach}]
Work in the finite-dimensional span of the coefficients. For $\varphi$ in its dual unit ball, let
\[
 A_\varphi=(\varphi(a_{ij}))_{i,j=1}^n.
\]
The image of the dual unit ball is a compact matrix family. Duality identifies its parameters from \eqref{eq:WS}--\eqref{eq:BHD} with
\[
 W=W_F,\quad S=S_F,\quad B=B_F,\quad H=H_F,\quad D=D_F.
\]
For example,
\[
 \sup_\varphi\norm{A_\varphi}_{\HS}
 =\sup_{\sum_{i,j}x_{ij}^2\le1}
   \sup_\varphi\abs{\varphi\!\left(\sum_{i,j}a_{ij}x_{ij}\right)}=H_F.
\]
Theorem~\ref{thm:main} consequently gives
\begin{equation}
 W_F\lesssim H_F+B_F+\sqrt{D_FS_F}.
 \label{eq:WF}
\end{equation}

Two elementary observations identify the required terms. First, if $g,h$ are independent and
$u=(g+h)/\sqrt2$, $v=(g-h)/\sqrt2$, then
\[
 X(u)-X(v)=2\sum_{i,j}a_{ij}g_i h_j.
\]
Both $u$ and $v$ are standard Gaussian, so the triangle inequality implies
\begin{equation}
 S_F\le M.
 \label{eq:SFM}
\end{equation}
Second, the definition of $H_F$ gives, for every $\norm{x}_2\le1$,
\[
 \E\left\|\sum_i a_{ii}g_i x_i\right\|_F
 \le H_F\E\left(\sum_i g_i^2x_i^2\right)^{1/2}
 \le H_F.
\]
Hence
\begin{equation}
 B_F\le B_0+H_F.
 \label{eq:diagonal}
\end{equation}
Insert \eqref{eq:WF}--\eqref{eq:diagonal} into Lemma~\ref{lem:moment}:
\[
 \bigl(\E\norm{X(g)}_F^p\bigr)^{1/p}
 \lesssim M+\sqrt p\,(B_0+H_F)+\sqrt{pD_FM}+pD_F.
\]
Finally, $\sqrt{pD_FM}\le(M+pD_F)/2$ proves \eqref{eq:banach}.
\end{proof}

Theorem~\ref{thm:main} also proves Conjecture 24 of \cite{ALM} directly: for its parameter $t$, take the matrix with entries $\sum_k a_{ijk}t_k$ and apply \eqref{eq:pbound}. The finite-family case suffices there, and bounded parameter sets follow by closure and approximation.

\clearpage
\appendix
\section{A direct chaining proof of the linear comparison}\label{app:chain}

This appendix proves Lemma~\ref{lem:chain} from Gaussian concentration and the Gaussian majorizing-measure theorem. Its role is to make explicit how the unweighted mean cost and the weighted fluctuation cost can be paid by the same approximations.

For a finite symmetric $T\subset\R^d$ containing $0$, set $R=\rad(T)$. The majorizing-measure theorem supplies finite sets $T_k\subset T$ and maps $\pi_k:T\to T_k$ such that, after changing universal constants and shifting the levels if necessary,
\begin{equation}
 T_0=\{0\},\quad |T_k|\le2^{2^k},\quad
 \Gamma:=\sup_{t\in T}\sum_{k\ge0}2^{k/2}
                 \norm{t-\pi_k(t)}_2\lesssim w(T).
 \label{eq:admissible}
\end{equation}
One may arrange that $T_k=T$ and $\pi_k(t)=t$ at all sufficiently large levels. Replacing the original singleton at level zero by $0$ costs at most $R$, and $w(T)\ge\sqrt{2/\pi}\,R$. This justifies the stated convention. We use only this standard form of the Gaussian majorizing-measure theorem; see also the background in Section 4 of \cite{Maurer}.

Fix $t\in T$ and write $r_k(t)=\norm{t-\pi_k(t)}_2$. Starting with $j_0=0$, retain the next approximation at the first index $j_{\ell+1}>j_\ell$ for which
\[
 r_{j_{\ell+1}}(t)\le\tfrac12 r_{j_\ell}(t).
\]
Stop when the error is zero. Let
\[
 \Delta_{j_{\ell+1}}(t)=\pi_{j_{\ell+1}}(t)-\pi_{j_\ell}(t),
\]
and set $\Delta_k(t)=0$ at all other levels. Then $t=\sum_{k\ge1}\Delta_k(t)$ and the retained errors decrease geometrically. In particular,
\begin{equation}
 \sum_{k\ge1}\norm{\Delta_k(t)}_2\le3\norm{t}_2\le3R.
 \label{eq:length-budget}
\end{equation}
If $k$ is a retained index and $j<k$ is the preceding retained index, then either $k=j+1$, or minimality of $k$ gives $r_{k-1}(t)>r_j(t)/2$. In both cases,
\[
 \norm{\Delta_k(t)}_2\le r_j(t)+r_k(t)\le3r_{k-1}(t).
\]
Consequently,
\begin{equation}
 \sup_{t\in T}\sum_{k\ge1}2^{k/2}\norm{\Delta_k(t)}_2
 \le3\sqrt2\,\Gamma.
 \label{eq:weighted-budget}
\end{equation}

For each Gaussian vector $z\in\R^m$, define the seminorm
\[
 N_z(v)=\sup_{A\in\cA}\abs{z^TAv}.
\]
For fixed $v$, we have $\E N_z(v)\le B\norm{v}_2$, and $z\mapsto N_z(v)$ is $D\norm{v}_2$-Lipschitz. Gaussian concentration therefore gives
\begin{equation}
 \PP\{N_z(v)>B\norm{v}_2+sD\norm{v}_2\}\le e^{-s^2/2}
 \qquad(s\ge0).
 \label{eq:edge-concentration}
\end{equation}
Every nonzero edge at level $k$ belongs to
\[
 \mathcal E_k=\bigcup_{0\le j<k}(T_k-T_j),
 \qquad |\mathcal E_k|\le\exp(C_0 2^k)
\]
for a universal $C_0$. A union bound in \eqref{eq:edge-concentration} gives the following statement for a sufficiently large universal $C_1$: with probability at least $1-Ce^{-u^2/2}$, simultaneously for every $k\ge1$ and $v\in\mathcal E_k$,
\begin{equation}
 N_z(v)\le
 \bigl(B+C_1D2^{k/2}+uD\bigr)\norm{v}_2,
 \qquad u\ge0,
 \label{eq:all-edges}
\end{equation}
Subadditivity of $N_z$, followed by \eqref{eq:length-budget} and \eqref{eq:weighted-budget}, yields on this event
\[
 \sup_{t\in T}N_z(t)
 \le 3BR+C D\Gamma+3uDR.
\]
Integrating in $u$ gives
\[
 \E\sup_{t\in T}N_z(t)\lesssim BR+D\Gamma+DR.
\]
Finally, $B\ge\sqrt{2/\pi}\,D$: choose a matrix and a unit vector approaching the maximal operator norm and use the mean absolute value of a scalar Gaussian. The term $DR$ is therefore absorbed by $BR$, while \eqref{eq:admissible} bounds $\Gamma$ by $w(T)$. This proves \eqref{eq:linear-chain}.

\end{document}